\numberwithin{equation}{section}
\theoremstyle{plain}
\newtheorem{theorem}{Theorem}
\newtheorem{lemma}[theorem]{Lemma}
\newtheorem{proposition}[theorem]{Proposition}
\theoremstyle{definition}
\newtheorem{definition}[theorem]{Definition}
\theoremstyle{remark}
\newtheorem{remark}[theorem]{Remark}
\begin{document}

\title{
Stability of piecewise flat Ricci flow
}

\author{Rory Conboye}

\date{}
\maketitle

\let\thefootnote\relax\footnotetext{
\hspace{-0.75cm}
	Department of Mathematics and Statistics
%	\hfill conboye@american.edu
	 \\
	American University
	 \\
	4400 Massachusetts Avenue, NW
%	\hfill ORCID ID: 0000-0002-6463-2029
	 \\
	Washington, DC 20016, USA
}
\let\thefootnote\svthefootnote

\begin{abstract}
	The stability of a recently developed piecewise flat Ricci flow is investigated, using a linear stability analysis and numerical simulations, and a class of piecewise flat approximations of smooth manifolds is adapted to avoid an inherent numerical instability.
	These adaptations have also been used in a related paper to show the convergence of the piecewise flat Ricci flow to known smooth Ricci flow solutions for a variety of manifolds.
	
	\
	
	\noindent{Keywords: Numerical Ricci flow, piecewise linear, geometric flow, linear stability
	}
	
	\
	
	\noindent{{Mathematics Subject Classification:} 
	53C44, 57Q15, 57R12, 65D18}
	
\end{abstract}

\section{Introduction}

% Ricci flow and uses

The Ricci flow is a uniformizing flow on  manifolds, evolving the metric to reduce the strength of the Ricci curvature. It was initially developed by Richard Hamilton to help prove the Thurston geometrization conjecture \cite{HamRF}, and it remains an important tool for analysing the interplay between geometry and topology. Recently, its use has expanded further, with numerical evolutions finding applications in facial recognition \cite{GuZengFacial}, cancer detection \cite{GuCancer}, and space-time physics \cite{WoolgarRFphys,WiseRF-BH,WiseRF-CFT}.

% New approach

Piecewise flat manifolds are formed by joining flat Euclidean segments in a consistent manner.
To allow for a natural refinement, the piecewise flat manifolds considered here are formed from a mesh of cube-like blocks, with each block composed of six flat tetrahedra. 
In this case, the topology is determined by the simplicial graph, and the geometry by a discrete set of edge-lengths.
This conveniently leads to a piecewise flat Ricci flow as a rate of change of edge-lengths, as introduced in \cite{PLCurv}, with the rate of change determined by an approximation of the smooth Ricci curvature at the edges.
However, despite the curvature approximations and some analytic solutions of the piecewise flat Ricci flow converging to their smooth counterparts as the mesh is refined \cite{PLCurv}, numerical evolutions have led to an exponential growth of errors in the edge-lengths.

% Instabilities

This instability is analysed here, and a method for suppressing the instability proposed, using blocks that are internally flat instead of being composed of flat tetrahedra. This is implemented by introducing a constraint on the length of an edge in the interior of each block. 
A linear stability analysis and numerical simulations are then used to demonstrate the initial instability and the effectiveness of its suppression. 
Computations using these adapted piecewise flat manifolds can already be seen in \cite{PLRF}, where they have successfully been used in the piecewise flat Ricci flow of manifolds with a variety of different properties, showing convergence to known Ricci flow solutions and behaviour.

% Outline

The remainder of the paper begins with an introduction to the piecewise flat manifolds used in this paper, and the piecewise flat Ricci flow, summarizing the main results from \cite{PLCurv}. The numerical instabilities are described in section \ref{sec:stabMethod}, along with a motivation and explanation of the suppression. A linear stability analysis in section \ref{sec:Instability} shows the exponential growth to arise from the numerical errors in the length measurements, with numerical simulations matching the growth rates. In section \ref{sec:Stability}, the same linear stability analysis no longer indicates an exponential growth when the suppressing method is used, with numerical simulations also showing stable behaviour.

\section{Background}
\label{sec:Prelims}

\subsection{Piecewise flat manifolds and triangulations}
\label{sec:Tri}

In three dimensions, the most simple of piecewise flat manifolds are formed by joining Euclidean tetrahedra together, with the triangular faces between neighbouring tetrahedra identified. The resulting graph encodes the topology of the manifold, with the geometry completely determined by the set of edge-lengths. 
Piecewise flat approximations of smooth manifolds can be constructed by first setting up a tetrahedral graph on the smooth manifold, using geodesic segments as edges. A piecewise flat manifold can then be defined using the same graph, with the edge-lengths determined by the lengths of the corresponding geodesic segments on the smooth manifold. Such a piecewise flat approximation is known as a triangulation of the smooth manifold.

In order to test for convergence to smooth curvature and Ricci flow, a set of triangulations which can be scaled in some regular way must be used. Three such triangulation-types were defined in \cite{PLRF}, using building blocks which can be tiled to form a complete tetrahedral graph. These building blocks are defined below, in terms of a set of coordinates $x$, $y$, and $z$, with each block covering a unit of volume. Diagrams of each block are also shown in figure \ref{fig:blocks}.

\begin{figure}[h]
	\centering
	\includegraphics[scale=1.2]{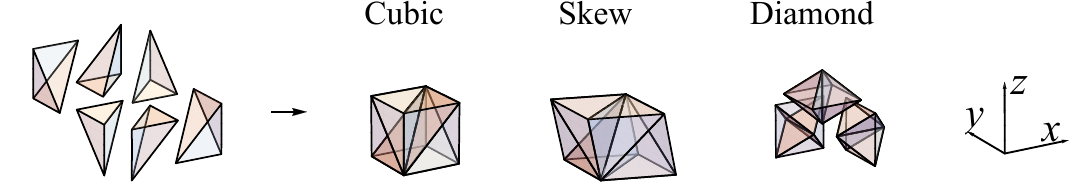}
	\caption{The three different block types, with the six tetrahedra of the cubic block on the far left, and a slight separation of the three diamond shapes forming the diamond block.}
	\label{fig:blocks}
\end{figure}

\begin{enumerate}
	\item The \emph{cubic} block forms a coordinate cube composed of six tetrahedra, with three independent edges along the coordinate directions, three independent face-diagonals and a single internal body-diagonal. The tetrahedra are specified so that the face-diagonals on opposite sides are in the same directions.
	
	\item The \emph{skew} block has the same structure as the cubic block, but with its vertices skewed in the $x$ and $z$ directions, with $v_x = (1, -1/3, 0)$ and $v_z = (-1/3, -2/9, 1)$.
	
	\item  The \emph{diamond} block is constructed from a set of four tetrahedra around each coordinate axis, with edges in the outer rings formed by the remaining coordinate directions.
\end{enumerate}

These blocks can be used to triangulate manifolds with three-torus ($T^3$) topologies, using a cuboid-type grid of blocks to cover the fundamental domain, identifying the triangles, edges and vertices on opposite sides. The resulting triangulations have computational domains that are compact without boundary. Manifolds with other topologies can also be triangulated using these blocks but require slightly more complicated tetrahedral graphs, see for example the Nil manifold in \cite{PLRF}. Since the stability should depend on the local structure of the piecewise flat manifold, only $T^3$ topology triangulations will be considered here.

\subsection{Piecewise flat curvature and Ricci Flow}
\label{sec:PFRF}

% Defecit angles and smooth curvature

While any neighbouring pair of tetrahedra in a piecewise flat manifold still forms a Euclidean space, a natural measure of curvature arises from the sum of the dihedral angles $\theta_t$ around an edge $\ell$, with the difference from $2 \pi$ radians known as a deficit angle,
\begin{equation}
\epsilon_\ell := 2 \pi - \sum_t \theta_t \, .
\end{equation}
Triangulations of smooth manifolds are deemed good approximations if the deficit angles are uniformly small. The resolution of a piecewise flat approximation can then be increased by having a higher concentration of tetrahedra, or a finer grid of the blocks defined above.

\begin{figure}[h]
	\centering
	\includegraphics[scale=1]{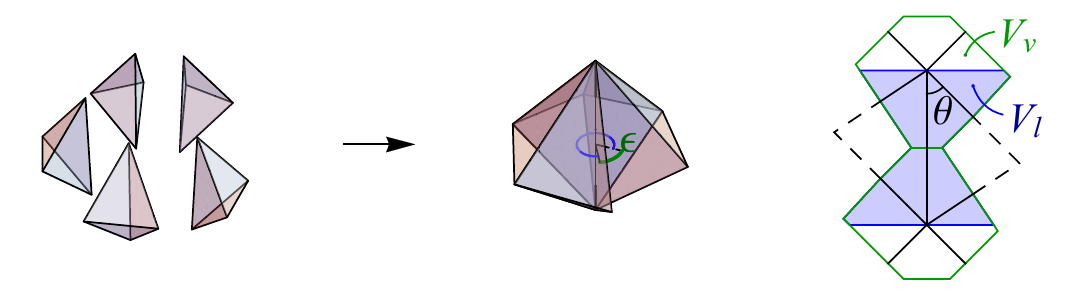}
	\caption{The deficit angle at an edge, and a cross section of the region around an edge showing the vertex and edge volumes.}
	\label{fig:tet}
\end{figure}

Conceptually, the deficit angles correspond to surface integrals of the sectional curvature orthogonal to each edge. However, a number of examples show that a single deficit angle does not carry enough information to approximate the smooth curvature directly, see section 5.1 of \cite{PLCurv} for example. Instead, this correspondence is used to construct volume integrals of both the scalar curvature at each vertex and the sectional curvature orthogonal to each edge, which can then be used to give the Ricci curvature along the edges.

% PF approximation of Ricci curvture along geodesic segments

Volumes $V_v$ associated with each vertex $v$ are defined to form a dual tessellation of the piecewise flat manifold, with barycentric duals used here. Edge-volumes $V_\ell$ are then defined as a union of the volumes $V_v$ for the vertices on either end, capped by surfaces orthogonal to the edges at each vertex, as shown in figure \ref{fig:tet}. From \cite{PLCurv}, piecewise flat approximations of the scalar curvature at each vertex $v$, sectional curvature orthogonal to each edge $\ell$, and the Ricci curvature along each edge $\ell$, are given by the expressions:
\begin{subequations}
\label{eq:RandK}
\begin{align}
R_v \
&= \frac{1}{V_v} \sum_{i} |\ell_i| \, \epsilon_i \, ,
\label{eq:R_v} \\
K_\ell^\perp
&= \frac{1}{V_\ell} \left(
	|\ell| \, \epsilon_\ell + \sum_i \frac{1}{2} |\ell_i| \cos^2 (\theta_i) \epsilon_i
\right) ,
\label{eq:K_l} \\
Rc_\ell
&= \frac{1}{4} (R_{v_1} + R_{v_2}) - K_\ell^\perp \, ,
\label{eq:Rc_l}
\end{align}
\end{subequations}
with the indices $i$ labelling the edges intersecting the volumes $V_v$ and $V_\ell$, $\theta_i$ representing the angle between the edge $\ell_i$ and $\ell$, and $v_1$ and $v_2$ indicating the vertices bounding $\ell$. Computations for a number of manifolds have shown these expressions to converge to their corresponding smooth values \cite{PLCurv}. Similar constructions have been developed for the extrinsic curvature \cite{PLExCurv}, with numerical computations successfully converging to their smooth values.

% Smooth, geodesic and PF Ricci flow

The Ricci flow of a smooth manifold changes the metric $g$ due to the Ricci curvature $Rc$, 
\begin{equation}
\frac{d g}{d t} = - 2 Rc \, .
\end{equation}
The resulting change in the length of a geodesic segment can be given solely by the Ricci curvature along and tangent to it, as shown in section 6.3 of \cite{PLCurv}. Since the edge-lengths of a triangulation correspond to the lengths of these geodesic segments, a piecewise flat approximation of the smooth Ricci flow can be given by a fractional change in the edge-lengths,
\begin{equation}
\label{eq:PFRF}
\frac{1}{|\ell|} \frac{d |\ell|}{d t}
= - Rc_\ell
\end{equation}
The equation above has been shown to converge to known smooth Ricci flow solutions as the resolution is increased, using analytic computations for symmetric manifolds in \cite{PLCurv}, and numerical evolutions for a variety of other manifolds in \cite{PLRF}. This approach has also been used by Alsing, Miller and Yau \cite{AlsingMillerYau18}, but with a different edge volume $V_\ell$, which works when the triangulations are adapted to the spherical symmetry of the manifolds studied there.

\section{Instability and suppression}
\label{sec:stabMethod}

\subsection{Instability}

Despite the close approximation of the piecewise flat Ricci curvature $Rc_\ell$ to its corresponding smooth values \cite{PLCurv}, initial numerical evolutions of the piecewise flat Ricci flow resulted in an exponential growth of the face-diagonals for both the cubic and skew triangulation types, even for manifolds that are initially flat. 
Since the deficit angles should all be zero for a triangulation of a flat manifold, this growth must arise from numerical errors.

The top two graphs in figure \ref{fig:NumErr} show how far the face-diagonal edge-lengths deviate from a flat triangulation. 
These deviations start at the level of the numerical precision and grow exponentially from there. 
The growth is invariant to the step size, has occurred 
for every grid size tested and 
for both the normalized and non-normalized piecewise flat Ricci flow equations, and soon dominates any evolution. The growth rates also \emph{increase} when the scale of the edge-lengths is reduced, countering any improved precision from an increase in resolution. This has led to the proposition below.

\begin{proposition}
	\label{prop:instab}
	The piecewise flat Ricci flow is exponentially unstable when directly applied to cubic and skew type triangulations.
\end{proposition}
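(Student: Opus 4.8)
The plan is to carry out a linear stability analysis of the flow \eqref{eq:PFRF} about a \emph{flat} cubic (or skew) triangulation of $T^3$ and to exhibit an unstable direction. Fix a regular grid of $N^3$ blocks with equilibrium edge-lengths $|\ell_i^0|$ chosen so that every deficit angle vanishes; by \eqref{eq:RandK} this forces $Rc_\ell = 0$ for every edge, so the configuration is a fixed point of \eqref{eq:PFRF}. Writing a perturbation as $|\ell_i| = |\ell_i^0|\,(1+u_i)$ with $|u_i|\ll 1$, I would linearize the right-hand side. The crucial simplification is that each of $R_v$, $K_\ell^\perp$ and hence $Rc_\ell$ is a sum of terms carrying an explicit factor of some deficit angle $\epsilon_i$, and all $\epsilon_i$ vanish at the flat configuration; by the product rule the linearization only sees the variation of the $\epsilon_i$, with the geometric prefactors ($|\ell_i|$, $\cos^2\theta_i$, $V_v$, $V_\ell$) frozen at their flat values. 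So the entire computation reduces to linearizing the dihedral angles $\theta_t$, equivalently the deficit angles $\epsilon_\ell = 2\pi - \sum_t \theta_t$, in the $u_i$ about the flat block.

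Next I would use the translational symmetry of the periodic grid to Fourier-decompose the perturbation into Bloch modes $u_i^{(n)} = \hat u_i\, e^{\,\mathrm{i}\,k\cdot n}$ labelled by a wavevector $k$ on the dual grid, reducing the linearized system to a finite family of ODEs $\dot{\hat u} = M(k)\,\hat u$, where $M(k)$ acts on the perturbations of the independent edge types in one block (the three coordinate edges, three face-diagonals, one body-diagonal). Assembling $M(k)$ requires the explicit first-order variation of the dihedral angles of the six tetrahedra together with the combinatorics of which edges enter the volumes $V_v$, $V_\ell$ in \eqref{eq:RandK}; for the cubic block the cube symmetry block-diagonalizes $M(k)$ further, and I expect the face-diagonal perturbations (precisely the ones observed to blow up) to span a low-dimensional invariant subspace on which the restricted operator can be written out by hand. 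The skew block is handled by the identical computation with the vertices placed at $v_x = (1,-1/3,0)$, $v_z = (-1/3,-2/9,1)$.

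The instability claim then amounts to showing that $M(k_\ast)$ has an eigenvalue with positive real part for some mode $k_\ast$. I would establish this either by direct diagonalization of the small symmetry-reduced sub-block, or, if that is awkward, by a sign argument — showing that the trace of $M$ restricted to the face-diagonal subspace is positive, or that the constant and leading coefficients of its characteristic polynomial have opposite signs, forcing a positive real root. Dimensional analysis ($Rc\sim|\ell|^{-2}$) shows the resulting growth rate scales like $|\ell^0|^{-2}$, consistent with the reported increase of the growth rate under refinement, and the predicted dominant rate can be cross-checked against the exponential slopes in the numerical runs. Finally, since the flat manifold remains an exact fixed point, the observed growth "for manifolds that are initially flat" is explained by noting that floating-point evaluation of the dihedral angles produces a nonzero $Rc_\ell$ of size set by machine precision, which generically has a nonzero component along the unstable eigenvector of $M(k_\ast)$ and is then amplified at the rate $\mathrm{Re}\,\lambda_{\max}$.

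The main obstacle I anticipate is the explicit linearization of the dihedral angles of the six-tetrahedron block and the bookkeeping needed to assemble $M(k)$ from \eqref{eq:RandK} — this is where the calculation is heaviest — followed by converting the numerically observed positive eigenvalue into a clean analytic statement; the symmetry reduction of the cubic block is what makes both steps tractable.
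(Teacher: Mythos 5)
Your strategy is sound and, at its core, coincides with the paper's: linearize the flow \eqref{eq:PFRF} about the flat configuration (where, exactly as you observe, only the first-order variations of the deficit angles survive, with all geometric prefactors frozen), exploit the translational symmetry of the periodic grid, and exhibit a positive eigenvalue whose magnitude scales like $|\ell^0|^{-2}$. The difference is organizational. The paper does not Fourier-decompose into Bloch modes; it simply notes that every row of the full coefficient matrix has the same multiset of coefficients (cubic case), so the all-ones vector is an eigenvector with eigenvalue equal to the row sum, which the explicit Mathematica linearization shows to be $12$; for the skew case, where the three face-diagonal types have different row sums, it uses a vector constant on each edge-type class, reducing the problem to a $3\times 3$ matrix of class-summed coefficients with maximal eigenvalue $\approx 0.966$. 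This is precisely your $M(0)$ mode restricted to the face-diagonal subspace (which is indeed invariant, as you anticipated), so your framework subsumes the paper's argument while being heavier than needed --- one positive eigenvalue at $k=0$ already proves instability, and the constant-eigenvector observation delivers it with no Fourier machinery. Two cautions: first, your fallback sign arguments would fail here --- the diagonal coefficient of the linearized equation is $-4$, so the trace of the face-diagonal block is negative even though a positive eigenvalue ($12$) exists, and the $3\times3\times3$ cubic matrix has $0$ as an eigenvalue, so the determinant-sign test is also inconclusive; you must actually compute the row sums (or diagonalize $M(0)$). Second, neither approach escapes the heavy step you flag: the explicit first-order variation of the dihedral angles of the six-tetrahedron block is done in the paper by symbolic computation, and your plan would need the same.
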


This proposition is proved in section \ref{sec:ProofInstab} using a linear stability analysis of all cubic and skew triangulations of a three-torus. Section \ref{sec:NumSim} then shows the growth rates for a number of numerical simulations to be in close agreement with the results of the linear stability analysis.

\begin{figure}[h!]
	\begin{center}
		\includegraphics[scale=1]{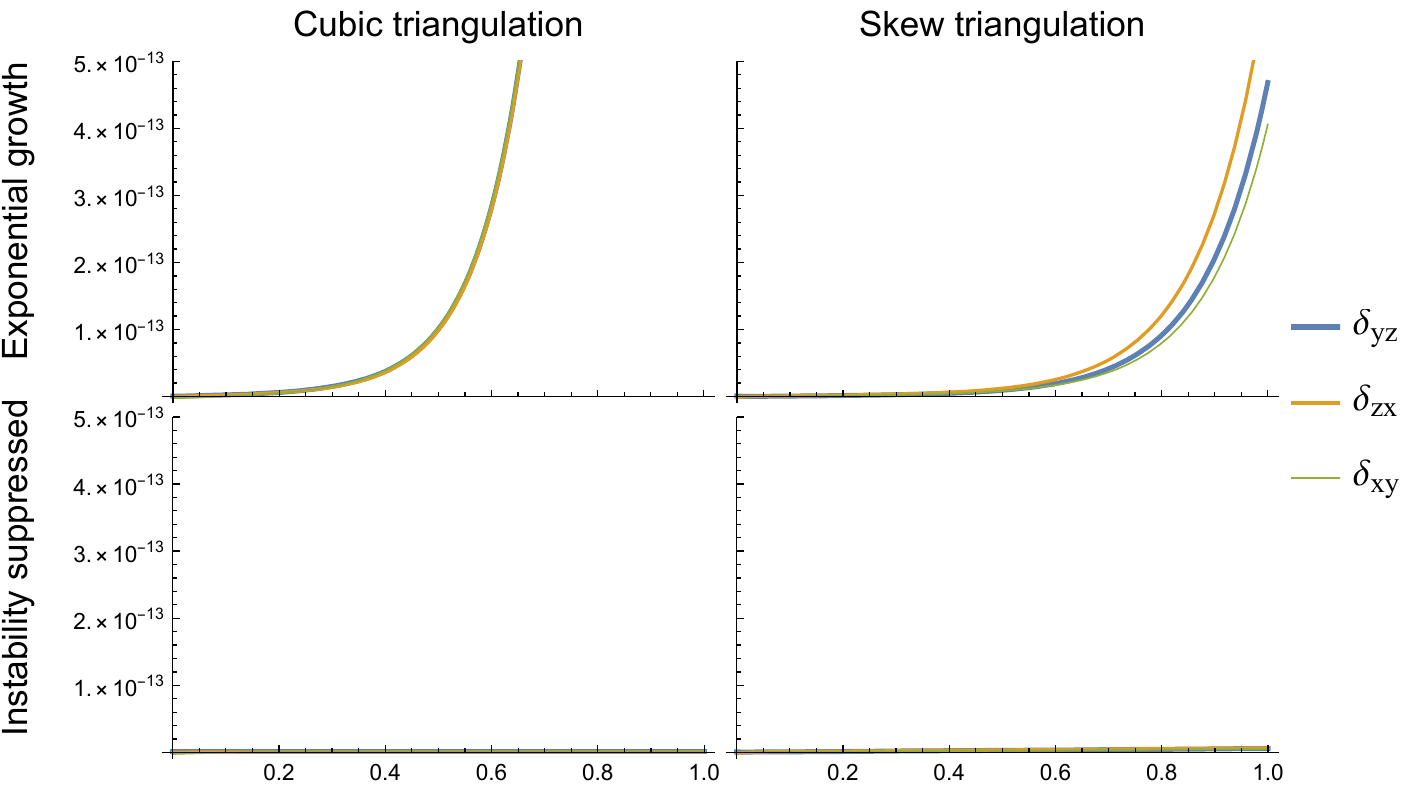}
	\end{center}
	\vspace{-0.5cm}
	\caption{
	Exponential growth of the errors in the face-diagonals is shown on the top row, for both cubic and skew triangulations of a flat three-torus. The bottom shows the suppression of this growth when the body-diagonals are adjusted to give blocks with flat interiors.
	}
	\label{fig:NumErr}
\end{figure}

\subsection{Suppressing instability}

While it is clear that the evolution shown on the top row of figure \ref{fig:NumErr} does not correspond to smooth Ricci flow, it is also inherently non-smooth in nature, particularly with the growth rates increasing as the resolution of a triangulation is increased. This suggests an extra freedom in the cubic and skew triangulations that does not arise in smooth manifolds.

In both the linearized equations in section \ref{sec:ProofInstab}, and the numerical simulations in section \ref{sec:NumSim}, all of the face-diagonal edges can be seen to grow at the same growth rate, with the other types of edges remaining unchanged. 
With only the face-diagonals growing, each face can still be viewed as a flat parallelogram. The exterior of each block can also still be embedded in three dimensional Euclidean space as a parallelepiped, with the change in the face-diagonals acting similar to a change of coordinates. This is shown in the left two images of figure \ref{fig:BodyDiag}.

\begin{figure}[h]
	\centering
	\includegraphics[scale=1.2]{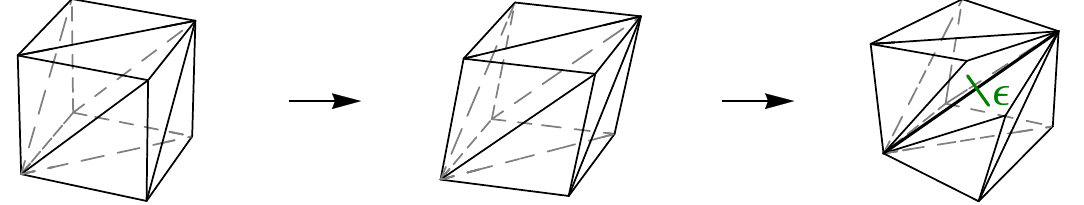}
	\caption{
		The effect of the face-diagonal growth on the exterior of a cubic block, with the resulting deficit angle arising from an unchanged body-diagonal shown on the right.
		}
	\label{fig:BodyDiag}
\end{figure}

The distance between the bottom left and top right vertices of the parallelepiped in figure \ref{fig:BodyDiag} must clearly grow if it is to remain embedded in Euclidean space, but the corresponding body-diagonals remain unchanged. This produces a growing deficit angle around the body-diagonals, shown on the right of figure \ref{fig:BodyDiag}, which then drives the growth of the face-diagonals. The addition of the body diagonal to each block can also be interpreted as producing an over-determined system, with seven edge-lengths associated with each vertex or block, while there are only six metric components at each point of a smooth manifold. However, this interpretation also suggests a solution.

The flat segments of a piecewise flat manifold do not necessarily have to be tetrahedra, these are just the most simple of segments. If each block of the cubic and skew triangulations are instead treated as flat, the mechanism that drives the exponential growth of the face-diagonals will be broken. This has lead to the following:

\begin{proposition}
	\label{prop:stab}
	The exponential instability is suppressed for the piecewise flat Ricci flow of cubic and skew type piecewise flat manifolds with flat blocks as the piecewise flat segments.
\end{proposition}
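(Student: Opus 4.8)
The plan is to mirror the linear stability argument used for Proposition \ref{prop:instab}, but with the body-diagonal no longer an independent degree of freedom. Concretely, I would start from an arbitrary cubic or skew triangulation of a flat three-torus, perturb the face-diagonal edge-lengths by an amount $\delta$ at the level of numerical precision, and compute the linearized piecewise flat Ricci flow $\frac{1}{|\ell|}\frac{d|\ell|}{dt} = -Rc_\ell$ to first order in $\delta$. The decisive difference from the unstable case is the treatment of the block: instead of six flat tetrahedra, each block is a single flat parallelepiped, so the body-diagonal length is not prescribed independently but is \emph{determined} by the external edges (the three coordinate edges and the three face-diagonals) through the Euclidean embedding of the parallelepiped. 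I would therefore first establish the constraint equation expressing $|b|$ (body-diagonal) as a function of the external edge-lengths, linearize it around the flat configuration, and use it to eliminate $\delta|b|$ from all subsequent curvature expressions.

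The key steps, in order, are: (i) write down the flat reference triangulation and confirm all deficit angles vanish, so that curvatures and hence $d|\ell|/dt$ vanish at zeroth order; (ii) derive the body-diagonal constraint $|b| = f(\text{external edges})$ from the parallelepiped geometry and linearize it, obtaining $\delta|b|$ as a linear combination of the face-diagonal perturbations $\delta$; (iii) recompute the deficit angles $\epsilon_\ell$ to first order — crucially, the deficit around the body-diagonal, which in the tetrahedral case was the nonzero driver of the instability (cf.\ figure \ref{fig:BodyDiag}), now vanishes identically because the block is flat by construction, and the internal tetrahedral faces that contributed spurious dihedral-angle terms are simply absent; (iv) feed the linearized deficit angles into the curvature formulas \eqref{eq:RandK} and assemble the linearized evolution matrix $M$ acting on the vector of face-diagonal perturbations; (v) show that $M$ has no eigenvalue with positive real part — ideally that the face-diagonal perturbations are, to linear order, pure gauge (a relabelling of the flat parallelepiped's internal coordinates, as suggested in the text) and hence lie in the kernel of $M$, so the perturbation neither grows nor is forced.

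I expect step (iv)–(v) — showing the linearized operator has no growing mode — to be the main obstacle, for the same reason it was the technical heart of Proposition \ref{prop:instab}: one must handle all cubic \emph{and} skew triangulations simultaneously, which means carrying the skew vectors $v_x=(1,-1/3,0)$, $v_z=(-1/3,-2/9,1)$ through the edge-volume and dihedral-angle bookkeeping, and the edge-volumes $V_v$, $V_\ell$ themselves shift at first order in $\delta$. The cleanest route, and the one I would attempt first, is to argue structurally rather than by brute force: since with flat blocks every deficit angle in the perturbed configuration is still zero (each block remains a flat parallelepiped under the constrained perturbation, and the face identifications across blocks are consistent because each face stays a flat parallelogram), every $Rc_\ell$ remains zero to first order, so $M \equiv 0$ on the constrained perturbation space and the right-hand side of \eqref{eq:PFRF} vanishes — no exponential growth is possible. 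The remaining work is then just to verify that the constrained perturbation genuinely keeps all blocks flat and all shared faces planar, i.e.\ that the constraint from step (ii) is compatible across neighbouring blocks on the torus; I would close the argument there, and then (as with Proposition \ref{prop:instab}) point to the numerical simulations of section \ref{sec:Stability} for confirmation.
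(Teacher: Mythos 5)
Your steps (i)--(iv) track the paper's actual method closely: the paper also keeps the body-diagonals, derives a constraint fixing their length to first order in each face-diagonal perturbation $\delta_j$ by setting the body-diagonal deficit angle to zero, works on a $4\times4\times4$ grid, and assembles the linearized coefficient matrix. But your proposed ``cleanest route'' in step (v) --- that every deficit angle remains zero under the constrained perturbation, hence $Rc_\ell$ vanishes to first order and $M\equiv 0$ --- is a genuine gap, and in fact false. Making each block internally flat kills the curvature \emph{at the body-diagonals}, but the curvature of a piecewise flat manifold concentrates on the edges shared between blocks, and a generic (non-uniform) perturbation of the face-diagonals deforms neighbouring blocks into \emph{different} parallelepipeds whose dihedral angles around the shared external edges no longer sum to $2\pi$. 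The paper's explicit linearized equation \eqref{eq:lxyF} has many nonzero coefficients ($-5$, $5/4$, $3/2$, \dots), so $M$ is emphatically not the zero operator; only the globally coherent perturbation (all $\delta_j$ equal, i.e.\ the all-ones vector) is pure gauge, and that is exactly the content of the paper's Theorem \ref{lem:ZeroEvalue}: the row sums of $M$ are zero, so the mechanism that produced the positive eigenvalue in Theorems \ref{lem:CubicInstab} and \ref{lem:SkewInstab} no longer yields instability.

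What remains after that observation is to control the rest of the spectrum of $M$, and here the paper is candid that no general proof exists that the zero row-sum eigenvalue is the one of largest real part. It instead computes the eigenvalues directly for $3\times3\times3$ and $3\times3\times4$ grids (table \ref{tab:stabEvals}), finding the largest real part to be zero (exactly for cubic, numerically for skew), and supplements this with the simulations of section \ref{sec:NumSimStab}. So your fallback position --- compute $M$ and check its spectrum --- is the correct and necessary route, but you should drop the claim that the constrained perturbation keeps the whole manifold flat; it keeps each \emph{block} flat, which is a strictly weaker statement, and the distinction is precisely where the nontrivial linearized dynamics live.
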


In practice the body-diagonals are retained since it is easier to compute dihedral angles and volumes with a tetrahedral graph. Their lengths are then continually re-defined to give zero deficit angles around them, and hence a flat interior for each block, as shown in figure \ref{fig:stable}. 
This results in a set of constraint equations to determine the lengths of the body-diagonals at each step of an evolution, circumventing the over-determined nature of the tetrahedral triangulations.

\begin{figure}[h]
	\centering
	\includegraphics[scale=1.2]{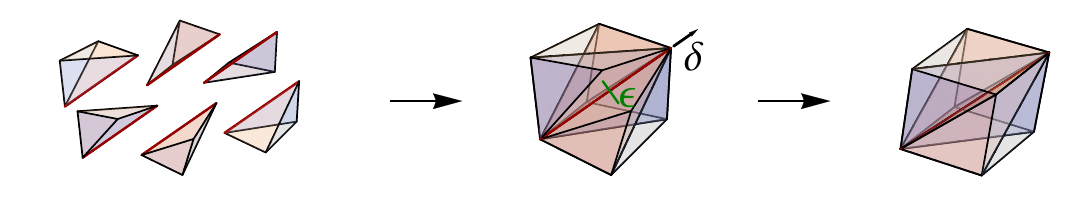}
	\caption{The deficit angle $\epsilon$ at the body-diagonal is shown, along with the perturbation $\delta$ of the body-diagonal that makes this deficit angle zero, giving a flat interior for the block.}
	\label{fig:stable}
\end{figure}

Proposition \ref{prop:stab} is proved for a number of cubic and skew grids of $T^3$ manifolds in section \ref{sec:ProofStab}, and numerical simulations show the suppression of the instability in section \ref{sec:NumSimStab}. The use of flat blocks has also given stable evolutions for all of the computations in \cite{PLRF}, giving remarkably close approximations to their corresponding smooth Ricci flows.

\section{Initial instability}
\label{sec:Instability}

Since it is the linear terms in a set of differential equations that lead to exponential growth, the linear stability of the piecewise flat Ricci flow was tested for cubic and skew triangulations of flat $T^3$ manifolds.

\subsection{Linear stability analysis}
\label{sec:LinStab}

A linear stability analysis uses the linear terms of a perturbation away from an equilibrium to test for the stability of that equilibrium.

\begin{definition} For a system of differential equations $\frac{d x_i}{d t} = f_i (x_j)$:
	\label{def:LinStabTerms}
	\begin{enumerate}
		\item A stationary solution $x_i = x_i^0$ is a solution that does not change with $t$, i.e. $\frac{d x_i^0}{d t} = 0$.
		
		\item Linearized equations at $x^0_i$ are the linear terms in a series expansion of $f_i(x_j^0 + \delta_j)$ about $\delta_j = 0$. The zero order terms vanish since they correspond to a stationary solution, resulting in the equations
		$\frac{d \delta_i}{d t} 
		= a_{i j} \, \delta_j$
		with real numbers $a_{i j}$.
		
		\item The system is linearly unstable at $x^0_i$ if the coefficient matrix $A = a_{i j}$ for the linearized equations has any eigenvalues with positive real parts. Solutions of the linearized equations consist of linear combinations of exponential functions, with the eigenvalues of $A$ giving the growth rates.
	\end{enumerate}
\end{definition}

Euclidean metrics provide stationary solutions for the smooth Ricci flow, having zero Ricci curvature, and triangulations of flat Euclidean manifolds are stationary for the piecewise flat Ricci flow, with zero deficit angles and therefore zero piecewise flat Ricci curvature. The edge-lengths for cubic and skew triangulations of flat Euclidean space, with unit volume blocks, are given in table \ref{tab:flatLengths} below.

\begin{table}[h!]
	\centering
	\begin{tabular}{l|ccc|ccc|c}
		& $\ell_x$ & $\ell_y$ & $\ell_z$ 
		& $\ell_{y z}$ & $\ell_{z x}$ & $\ell_{x y}$ 
		& $\ell_{x y z}$
		\\
		\hline
		Cubic
		& $1$ & $1$ & $1$
		& $\sqrt{2}$ & $\sqrt{2}$ & $\sqrt{2}$
		& $\sqrt{3}$
		\\
		Skew
		& $\frac{1}{3}\sqrt{10}$ & $1$ & $\frac{1}{9}\sqrt{94}$
		& $\frac{1}{9}\sqrt{139}$ & $\frac{1}{9}\sqrt{142}$ & $\frac{1}{3}\sqrt{13}$
		& $\frac{1}{9}\sqrt{133}$
		\\
	\end{tabular}
	\caption{The edge-lengths for flat triangulations with unit volume blocks.}
	\label{tab:flatLengths}
\end{table}

Any global scaling of the edge-lengths in table \ref{tab:flatLengths} will also be stationary solutions of the piecewise flat Ricci flow. However, the linearized equations and the eigenvalues of the coefficient matrix are not invariant to this rescaling. The effect of globally rescaling the triangulation blocks is therefore given below.

\begin{lemma}[Scale factor]
	\label{lem:scale}
	If a triangulation of a flat manifold has a coefficient matrix $A$, then the coefficient matrix for a rescaling of all of the edges by a factor of $c$ will be $\frac{1}{c^2} A$.
\end{lemma}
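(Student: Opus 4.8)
The plan is to write the flow \eqref{eq:PFRF} as an autonomous system $\frac{d x_i}{dt} = f_i(x_j)$ with $x_i = |\ell_i|$ and $f_i(x_j) = -x_i\, Rc_{\ell_i}$, and to exploit the fact that each $f_i$ is a homogeneous function of the edge-lengths. A global rescaling of a triangulation leaves its simplicial graph --- and hence the index sets appearing in \eqref{eq:RandK} and the combinatorics of the barycentric dual --- completely unchanged, so $f_i$ is literally the same function; the coefficient matrix for the rescaled triangulation is just the Jacobian of $f$ evaluated at the rescaled flat stationary solution $c\, x^0$ rather than at $x^0$ (both are stationary, as already noted after Table \ref{tab:flatLengths}).

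First I would record the scaling degrees of the ingredients of \eqref{eq:RandK}. Multiplying every edge-length by $c$ is a similarity of each Euclidean tetrahedron, so it fixes all dihedral angles, hence all deficit angles $\epsilon_\ell$ and all inter-edge angles $\theta_i$: these are homogeneous of degree $0$. The vertex and edge volumes $V_v$, $V_\ell$ are built from affine combinations (barycenters) of the tetrahedra's vertices, so they are homogeneous of degree $3$. Substituting into \eqref{eq:R_v}--\eqref{eq:Rc_l} then shows that $R_v$ and $K_\ell^\perp$, and therefore $Rc_\ell$, are homogeneous of degree $-2$, whence $f_i(x_j) = -x_i Rc_{\ell_i}$ is homogeneous of degree $-1$, i.e. $f_i(c x_j) = c^{-1} f_i(x_j)$ for all $c>0$ (a volume-normalizing term, being an average of curvatures times a length, is also degree $-1$, so this is insensitive to the choice of normalization).

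Next I would differentiate this identity: applying $\partial/\partial x_j$ to $f_i(c x_k) = c^{-1} f_i(x_k)$ and using the chain rule gives $c\,(\partial_j f_i)(c x_k) = c^{-1}(\partial_j f_i)(x_k)$, hence $(\partial_j f_i)(c x_k) = c^{-2}(\partial_j f_i)(x_k)$. Evaluating at the flat solution $x^0$ of Table \ref{tab:flatLengths}, the coefficient matrix for the rescaled triangulation is $a_{ij}^{(c)} = (\partial_j f_i)(c x^0) = c^{-2}(\partial_j f_i)(x^0) = c^{-2} a_{ij}$, which is the assertion. Equivalently, degree $-1$ homogeneity of $f$ means that $y(t) := c\, x(t/c^2)$ solves the flow whenever $x$ does, so the two linearizations are conjugate up to the time rescaling $t \mapsto t/c^2$ and the eigenvalues --- the growth rates of Definition \ref{def:LinStabTerms} --- scale by $c^{-2}$.

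The main obstacle is simply making the bookkeeping of the second paragraph airtight: one must verify that \emph{every} quantity entering \eqref{eq:RandK} and the barycentric-dual construction is scale-covariant with the claimed degree --- in particular that the deficit angles depend on the edge-lengths only through the shapes of the tetrahedra --- and that $f$ is differentiable in a neighbourhood of $x^0$, which holds because the flat triangulations are non-degenerate. Once the homogeneity of $f$ is in hand, the conclusion is a one-line computation.
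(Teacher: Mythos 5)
Your proposal is correct, and it reorganizes the paper's argument in a genuinely tighter way. The paper works directly at the level of the linearized coefficients: it writes $Rc_i = \sum_k b_{ik}|\ell_k|\epsilon_k/V_k$, records that lengths scale by $c$, volumes by $c^3$ and the angle-dependent coefficients $b_{ik}$ not at all, and then argues separately that the \emph{first-order coefficient} of the deficit angle with respect to a perturbation $\delta_j$ picks up a factor $1/c$ (on the grounds that the deficit angle depends only on relative lengths, so rescaling everything except $\delta_j$ is equivalent to rescaling $\delta_j$ alone by $1/c$); multiplying these factors gives $a_{ij}^c = c^{-2}a_{ij}$. You instead establish that the full vector field $f_i = -x_i\,Rc_{\ell_i}$ is homogeneous of degree $-1$ --- which uses exactly the same scaling degrees (angles $0$, lengths $1$, volumes $3$) but only for the \emph{unperturbed} quantities --- and then obtain the Jacobian scaling $(\partial_j f_i)(cx) = c^{-2}(\partial_j f_i)(x)$ by differentiating the homogeneity identity. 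This buys you two things: the slightly delicate step about how the linearized deficit-angle coefficient transforms falls out automatically from the chain rule rather than requiring a separate geometric argument, and the dynamical reformulation $y(t) = c\,x(t/c^2)$ makes transparent why \emph{all} eigenvalues, not just individual matrix entries, scale by $c^{-2}$. The paper's version has the minor advantage of staying entirely within the explicit form \eqref{eq:scale1} used elsewhere in the proofs, but your route is the cleaner proof of the lemma as stated.
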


\begin{proof}
	From (\ref{eq:RandK}), the piecewise flat Ricci curvature for an edge $\ell_i$ can be written as the sum
	\begin{equation}
	\label{eq:scale1}
	Rc_i 
	= \sum_k b_{i k} \frac{|\ell_k| \, \epsilon_k}{V_k},
	\end{equation}
	for some coefficients $b_{i k}$. The series expansion of $Rc_i$ for a perturbation $\delta_j$ of some edge $\ell_j$ is given by the series expansions of the individual terms appearing on the right-hand side above. The zero-order terms for the deficit angles $\epsilon_k$ will always be zero, since these correspond to a triangulation of Euclidean space. Hence, the linear terms in the expansion of $Rc_i$ must be given by the linear terms from the deficit angles, and the zero-order terms, or non-perturbed values, for the remaining variables.
	
	For a global rescaling of all the edge-lengths of a triangulation by a factor of $c$, the volumes are clearly scaled by $c^3$. The coefficients $b_{i k}$ can be seen in (\ref{eq:RandK}) to be either constant or depend on the angles between edges, and therefore not depend on the scaling. This gives the relations
	\begin{equation}
	|\ell_k^c| = c \,|\ell_k|
	, \qquad
	V_k^c = c^3 \, V_k
	, \qquad
	b_{i k}^c = b_{i k},
	\end{equation}
	with the superscript $c$ representing the rescaled terms. The deficit angles depend on the relative lengths of the edges, and since the perturbation $\delta_j$ is the only length that is \emph{not} rescaled by $c$, the deficit angle would be the same as if only $\delta_j$ was rescaled, but by a factor of $1/c$. An expansion of the deficit angle $\epsilon_k^c(\delta_j)$ for the rescaled blocks is therefore given by the equation
	\begin{equation}
	\epsilon_k^c (\delta_j) \
	= \ \epsilon_{k j}^c \, \delta_j + O(\delta_j^2) \
	= \ \frac{1}{c} \, \epsilon_{k j} \, \delta_j + O(\delta_j^2) \ ,
	\end{equation}
	with $\epsilon_{k j}^c$ and $\epsilon_{k j}$ representing the first order coefficients.
	
	Using the piecewise flat Ricci flow equation (\ref{eq:PFRF}), the linear coefficients $a_{i j}^c$ for the rescaled triangulation can now be given in terms of the linear coefficient $a_{i j}$ for the original triangulation,
	\begin{equation}
	a_{i j}^c \
	= \ - |\ell_i^c| \, \sum_k b_{i k}^c 
	\frac{|\ell_k^c| \, \epsilon_{k j}^c}{V_k^c} \
	= \ - \left(c \, |\ell_i|\right) \sum_k b_{i k} 
	\frac{\left(c \, |\ell_k|\right) \, 
		\left(\frac{1}{c}\epsilon_{k j}\right)}{c^3 \, V_k} \
	= \ \frac{1}{c^2} \, a_{i j}.
	\end{equation}
	The coefficient matrix $A$, and hence its eigenvalues, are therefore scaled by a factor of $1/c^2$ when all the edge-lengths of a triangulation are rescaled by $c$.
\end{proof}

\subsection{Proof of proposition \ref{prop:instab}}
\label{sec:ProofInstab}

To calculate the linearized equations for the piecewise flat Ricci flow, a number of properties of both the graph structure and the linearization itself are taken advantage of.

\begin{itemize}
    \item It is only necessary to determine the linearized equations for a single set of three face-diagonals due to the symmetry of the grids. The equations for all of the other face-diagonals will have the same coefficients, with an appropriate translation of indices.
    
    \item A $3 \times 3 \times 3$ grid of cubic or skew blocks provides all of the edges required to determine the piecewise flat Ricci curvature for the edges in the central block. This can be seen from (\ref{eq:RandK}), where the piecewise flat Ricci curvature $Rc_\ell$ depends only on the deficit angles at edges $\ell_j$ that have a vertex in common with $\ell$, and these depend only on the lengths of edges in tetrahedra containing the edge $\ell_i$.
    
    \item Series expansions of the piecewise flat Ricci curvature need only be computed for a single perturbation variable at a time, with the linear terms for each perturbation computed separately and then added together to give the complete linearized equation. This avoids the need to compute series expansions of multiple variables simultaneously.
\end{itemize}
Symbolic manipulations in \emph{Mathematica} were used to calculate the linearized equations for both the cubic and skew triangulations, with the code and results available in the Zenodo repository at https://doi.org/10.5281/zenodo.8067524.

\begin{theorem}[Linear instability of cubic triangulations]
\label{lem:CubicInstab}
	The piecewise flat Ricci flow of any cubic triangulation of a flat $T^3$ manifold is linearly unstable, with perturbations growing exponentially at a rate of at least $12/c^2$ for cubic blocks with volume $c^3$.
\end{theorem}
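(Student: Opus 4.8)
The plan is to exhibit one explicit unstable mode with growth rate exactly $12$ for unit-volume blocks, and then rescale using Lemma \ref{lem:scale}. By that lemma it suffices to treat a cubic triangulation of flat $T^3$ whose blocks have unit volume, with the flat edge-lengths of Table \ref{tab:flatLengths}; this is a stationary solution of \eqref{eq:PFRF} in the sense of Definition \ref{def:LinStabTerms}, so the linearized coefficient matrix $A$ is defined, and the goal becomes to show that $A$ has an eigenvalue with real part at least $12$ — for then Lemma \ref{lem:scale} gives an eigenvalue of real part at least $12/c^2$ for blocks of volume $c^3$.

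The first step is to cut the problem down to finite, grid-independent size. The linearized operator $A$ commutes with the lattice translations of the block grid, so the subspace of \emph{spatially uniform} perturbations — those assigning the same increments $\delta_x,\delta_y,\delta_z,\delta_{yz},\delta_{zx},\delta_{xy},\delta_{xyz}$ to every edge of the seven respective types — is $A$-invariant, and $A$ restricts on it to a $7\times 7$ matrix $B$. Because the linearized equation for a given edge involves only the edges meeting it inside a $3\times3\times3$ sub-grid (the second structural observation above) and a uniform perturbation makes that local picture identical on every cubic grid, $B$ is independent of the grid size; hence every eigenvalue of $B$ is an eigenvalue of $A$ for \emph{every} cubic triangulation of flat $T^3$ large enough for the local computation to be well posed.

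The second step is to compute $B$. Perturbing each of the seven edge types in turn (the third structural observation lets this be done one variable at a time), I would expand every dihedral angle of the six tetrahedra of the cubic block to first order in that perturbation, assemble the deficit angles $\epsilon_k$, substitute into \eqref{eq:R_v}, \eqref{eq:K_l} and \eqref{eq:Rc_l} evaluated at the flat lengths of Table \ref{tab:flatLengths}, and read off the linear coefficient of the perturbation in each $-|\ell_i|\,Rc_i$, which is the corresponding column of $B$; this is exactly the symbolic \emph{Mathematica} calculation referenced in the text. I would then diagonalize $B$ using the symmetry that simultaneously permutes the $x,y,z$ directions (acting on the coordinate edges and on the face-diagonals and fixing the body-diagonal): this block-diagonalizes $B$ into a trivial-representation block on the span of $\delta_x+\delta_y+\delta_z$, $\delta_{yz}+\delta_{zx}+\delta_{xy}$ and $\delta_{xyz}$, together with a two-dimensional standard-representation block. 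The unstable eigenvector — consistent with the numerics, in which only the face-diagonals grow and all at a common rate — lies in one of these blocks and carries eigenvalue $12$; since $12>0$ the system is linearly unstable, and Lemma \ref{lem:scale} upgrades this to the stated rate $12/c^2$.

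I expect the main obstacle to be the explicit linearization in the second step: correctly computing the first-order response of all the dihedral angles around each edge of the cubic block to each of the seven edge-length perturbations, and propagating these through the vertex-volume, edge-volume and curvature formulas without error, which is why it is done symbolically rather than by hand. A secondary point needing care is the justification in the first step that the $7\times 7$ uniform-mode matrix $B$ is genuinely a restriction of the true coefficient matrix $A$ on every admissible grid, so that its eigenvalues really do belong to $A$; this rests on translation invariance together with the locality of \eqref{eq:RandK}, and should be spelled out. Because the statement only asserts a lower bound on the growth rate, none of this requires the full spectrum of $B$ or of $A$: exhibiting the single eigenvalue $12$ (equivalently, one uniform perturbation satisfying $\tfrac{d\delta}{dt} = 12\,\delta$ to first order) is enough, which makes the conclusion robust to the remaining, unneeded eigenvalues.
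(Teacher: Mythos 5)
Your proposal is correct and follows essentially the same route as the paper: linearize symbolically about the flat unit-volume triangulation, use translation invariance to identify the spatially uniform face-diagonal mode as an eigenvector whose eigenvalue is the constant row-sum $12$, and rescale via Lemma \ref{lem:scale}. The only difference is presentational --- the paper reads the eigenvalue directly off the face-diagonal linearized equation \eqref{eq:lxyCubic} as its row-sum, with the all-ones vector as eigenvector, rather than passing through your $7\times 7$ uniform-mode matrix and its $S_3$ decomposition.
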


\begin{proof}
	To begin, the linearized equations for the piecewise flat Ricci flow equations about a flat cubic triangulation with unit volume blocks are calculated. Each face-diagonal in a $3 \times 3 \times 3$ grid of cubic blocks is perturbed in turn, with the linearized equations at a set of three face-diagonals in the central block computed for each perturbation, and the contributions from all of these perturbation then added together. The resulting linearized equation at the $x y$-face-diagonal takes the form:
	\begin{align}
	\label{eq:lxyCubic}
	\frac{d}{d t} \, \delta_{x y} &(0,0,0)
	=
	\nonumber \\ & 
	- 4 \ \delta_{x y} (0,0,0)
	-     \delta_{x y} (-1,-1,0)
	-     \delta_{x y} (1,1,0)
	\nonumber \\ &
	+ \frac{3}{2} \ \big(
		  \delta_{x y} (-1,0,0)
		+ \delta_{x y} (0,-1,0)
		+ \delta_{x y} (0,1,0)
		+ \delta_{x y} (1,0,0)
		\big)
	\nonumber \\ &
	+ \, 2 \ \big(
		  \delta_{x y} (0,0,-1)
		+ \delta_{x y} (0,0,1)
		\big)
	\nonumber \\ &
	\nonumber \\ &
	- \frac{1}{2} \ \big(
		  \delta_{y z} (0,-1,-1)
		+ \delta_{y z} (1,1,0)
		\big)
		+ \delta_{y z} (0,-1,0)
		+ \delta_{y z} (1,1,-1)
	\nonumber \\ &
	+ \frac{3}{2} \ \big(
		  \delta_{y z} (0,0,0)
		+ \delta_{y z} (1,0,-1)
		\big)
	\nonumber \\ &
	\nonumber \\ &
	- \frac{1}{2} \ \big(
		  \delta_{z x} (-1,0,-1)
		+ \delta_{z x} (1,1,0)
		\big)
		+ \delta_{z x} (-1,0,0)
		+ \delta_{z x} (1,1,-1)
	\nonumber \\ &
	+ \frac{3}{2} \ \big(
		  \delta_{z x} (0,0,0)
		+ \delta_{z x} (0,1,-1)
		\big)
		,
	\end{align}
	with the coordinates in parentheses indicating the location of the perturbed edge in the triangulation grid, with respect to the central block. Due to the symmetries in the cubic lattice, the linearized equations for the other two face-diagonals are given by a permutation of the $\{x y, y z, z x\}$ subscripts and a similar permutation of the the grid coordinates. The linearized equation for \emph{any} face-diagonal in a $T^3$ grid of cubic blocks can then be given by a discrete linear transformation of the grid coordinates.

	The set of coefficients in (\ref{eq:lxyCubic}) are the same for the linearized equations at all of the face-diagonals in the triangulation, for any size grid, so the set of elements in each row of the coefficient matrix $A$ will also be the same. These elements sum to $12$, which must be an eigenvalue of $A$ with a corresponding eigenvector consisting of all ones. From lemma \ref{lem:scale}, the coefficient matrix for a triangulation with blocks of volume $c^3$ will have an eigenvalue of $12/c^2$. Any solution to the set of linearized equations must then contain an exponential term with a growth rate of $12/c^2$, leading to an exponential growth for the perturbations in all of the face-diagonals of at least this rate.
\end{proof}

\begin{theorem}[Linear instability of skew triangulations]
\label{lem:SkewInstab}
	The piecewise flat Ricci flow of any skew triangulation of a flat $T^3$ manifold is linearly unstable, with perturbations growing exponentially at a rate of at least $0.996/c^2$ for cubic blocks with volume $c^3$.
\end{theorem}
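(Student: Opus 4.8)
The plan is to run the proof of Theorem~\ref{lem:CubicInstab} again, replacing the ``equal row sums'' observation (which fails once the coordinate directions are no longer interchangeable) by a reduction to the spatially homogeneous perturbations. First I would use the same \emph{Mathematica} computation that produced (\ref{eq:lxyCubic}) to obtain the linearized piecewise flat Ricci flow equations about the flat skew triangulation with unit volume blocks, whose edge-lengths are the skew entries of Table~\ref{tab:flatLengths}. The three reductions listed before Theorem~\ref{lem:CubicInstab} still apply: a $3\times3\times3$ block of skew cells contains every edge needed to evaluate $Rc_\ell$ on the central block; one perturbed face-diagonal at a time suffices; and translation invariance of the $T^3$ grid means only the three face-diagonals of a single central block need their equations recorded. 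Unlike the cubic case, these three equations (one for each of $\delta_{yz}$, $\delta_{zx}$, $\delta_{xy}$ at $(0,0,0)$) are \emph{not} related by a permutation, and the coefficients in different equations no longer sum to a common value, so the all-ones vector is no longer an eigenvector of the coefficient matrix $A$.

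To recover an explicit eigenvalue I would exploit the one symmetry that survives: the linearized coefficients depend only on the grid displacement between the perturbed and target edges, so $A$ commutes with the lattice translations of the $T^3$ grid. Hence the three-dimensional space of spatially constant perturbations --- one degree of freedom per face-diagonal type --- is $A$-invariant, and $A$ restricted to it is a $3\times3$ matrix $\tilde A$ whose $(I,J)$ entry is the sum over all grid positions of the coefficient of the type-$J$ perturbation appearing in the linearized equation for the type-$I$ face-diagonal at $(0,0,0)$. (For the cubic block this recipe gives the $3\times3$ matrix all of whose entries equal $4$, with top eigenvalue $12$ --- the ``row sum'' content of Theorem~\ref{lem:CubicInstab} read off the homogeneous mode.) Assembling $\tilde A$ from the three skew equations and computing the roots of its characteristic polynomial (a cubic whose coefficients are built from the irrational lengths in Table~\ref{tab:flatLengths}), I expect a real positive eigenvalue numerically equal to $0.996\ldots$.

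This eigenvalue of $\tilde A$ is an eigenvalue of $A$, and its eigenvector --- the constant perturbation --- is a genuine perturbation of \emph{every} skew $T^3$ triangulation, with the same eigenvalue independent of grid size. So Definition~\ref{def:LinStabTerms} gives linear instability with an exponential mode of rate $0.996$ for unit volume blocks, and Lemma~\ref{lem:scale} promotes this to $0.996/c^2$ for blocks of volume $c^3$. Since the theorem only asserts growth ``at least'' this fast, it is irrelevant whether some non-constant, wave-like mode of $A$ grows faster; one eigenvalue in the homogeneous sector is all that is needed.

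The main obstacle is the first step: the skew edge-lengths are irrational and the skew block has no residual permutation symmetry, so each of the three linearized equations must be generated and simplified independently, and the deficit-angle expansions (genuinely asymmetric for skew cells) are considerably heavier than in the cubic case. Once the three equations are in hand, forming $\tilde A$ and extracting its spectrum is routine; the only delicate point is verifying that the relevant eigenvalue of the $3\times3$ reduction is real and positive so that Definition~\ref{def:LinStabTerms} applies --- if it came out complex one would instead quote its (still positive) real part, but $0.996$ should appear as an honest real root of the reduced cubic.
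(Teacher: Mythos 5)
Your proposal is essentially the paper's own proof: the paper likewise reduces $A$ to the block-constant (one degree of freedom per face-diagonal type) subspace, obtaining the $3\times3$ matrix of summed coefficients in (\ref{eq:SkewInstab3mat}), extracts its largest eigenvalue with a positive eigenvector, and applies Lemma~\ref{lem:scale}. The only caveat is numerical: the paper's computed eigenvalue is approximately $0.966$ (consistent with the value $8.697 \approx 0.966\cdot 9$ for $c=1/3$ in Table~\ref{tab:instabEvals}), so the $0.996$ in the theorem statement appears to be a typo that your write-up inherits.
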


\begin{proof}
	As with the cubic triangulations in lemma \ref{lem:CubicInstab}, the linearized equations about a flat skew triangulation with unit volume blocks are first computed. Unlike the cubic case, the skew blocks do not have the same symmetries as the cubic blocks, so the linearized equations for each of the three types of face-diagonals, $\ell_{x y}$, $\ell_{y z}$ and $\ell_{z x}$ must be found separately. The linearized equations are not displayed here, but can be found in the Zenodo repository, https://doi.org/10.5281/zenodo.8067524. 
	As with the cubic case, the linearized equations for all of the face-diagonals in a $T^3$ grid of skew blocks can be given by a discrete transformation of the grid coordinates for each of the three face-diagonals in a single block.

	From these equations, it can be seen that the sum of the coefficients are not the same for each face-diagonal, so the vector of all ones is not an eigenvector for the coefficient matrix $A$ as it was for the cubic triangulations. However, by ordering the indices of the face-diagonals $\ell_i$ according to their edge-type, a similar approach can be used. The indices for an $n$-block triangulation are defined so that $i \in \{1, ..., n\}$ for the $yz$-diagonals, $i \in \{n+1, ..., 2n\}$ for the $zx$-diagonals and $i \in \{2n + 1, ..., 3n\}$ for the $xy$-diagonals. Defining a vector $v$ so that
	\begin{equation}
	\label{eq:SkewVec}
	v = 
	\left\{
	\begin{array}{ccc}
		p & \textrm{if} & 1 \leq i \leq n \\
		q & \textrm{if} & n+1 \leq i \leq 2n \\
		r & \textrm{if} & 2n+1 \leq i \leq 3n \\
	\end{array}
	\right. \quad \textrm{for } p, q, r, \in \mathbb{R},
	\end{equation}
	the product of the matrix $A$ with $v$ is
	\begin{equation}
	\label{eq:SkewSum}
	A \ v 
	 = a_{i j} \cdot v_j 
	 = \left(\sum_{j = 1}^{n} a_{i j}\right) p 
	 + \left(\sum_{j = n+1}^{2n} a_{i j}\right) q 
	 + \left(\sum_{j = 2n+1}^{3n} a_{i j}\right) r .
	\end{equation}
	Since there will only be three different values for the elements of the resulting vector, one for each type of face-diagonal, the information in this product can be reduced to the $3 \times 3$ matrix product below,
	\begin{equation}
	\label{eq:SkewInstab3mat}
	\left(
	\begin{array}{ccc}
		0.308 & 0.311 & 0.282 \\
		0.410 & 0.415 & 0.376 \\
		0.266 & 0.269 & 0.244 \\
	\end{array}
	\right) \left(
	\begin{array}{c}
		p \\
		q \\
		r \\
	\end{array}
	\right) ,
	\end{equation}
	with the matrix elements obtained by summing the appropriate coefficients in the linearized equations. This matrix has a maximum eigenvalue of approximately $0.966$, with a corresponding eigenvector of $(0.532, 0.710, 0.461)$. From (\ref{eq:SkewSum}), the matrix $A$ must also have this eigenvalue, with eigenvector $v$ from (\ref{eq:SkewVec}) where $p = 0.532$, $q = 0.710$ and $r = 0.461$.

	From lemma \ref{lem:scale}, the coefficient matrix for a triangulation with blocks of volume $c^3$ will have an eigenvalue of approximately $0.966/c^2$. Any solution to the set of linearized equations must then contain an exponential term with a growth rate of approximately $0.966/c^2$, leading to an exponential growth for the perturbations in all of the face-diagonals of at least this rate.
\end{proof}

The linearized equations in the proofs of lemmas 5 and 6 have also been used to construct the coefficient matrices for $3 \times 3 \times 3$ and $3 \times 3 \times 4$ grids of blocks using \emph{Mathematica}. This was done for both the cubic and skew blocks, with the edge-lengths from table \ref{tab:flatLengths}, and for a rescaling of these edges by a factor of $1/3$. The eigenvalues for each matrix were then computed, again using \emph{Mathematica}, with the largest real parts matching the  eigenvalues in lemmas \ref{lem:CubicInstab} and \ref{lem:SkewInstab}, as shown in table \ref{tab:instabEvals}.

\begin{table}[h!]
	\centering
	\begin{tabular}{l|cc|cc|}
	    \multicolumn{1}{c}{}
	    &\multicolumn{2}{c}{$c = 1$}
	     &\multicolumn{2}{c}{$c = 1/3$}
	    \\
		&  $3 \times 3 \times 3$
         & $3 \times 3 \times 4$
         & $3 \times 3 \times 3$
         & $3 \times 3 \times 4$
		\\
		\hline
		Cubic
		& $ 12, \,  6, \, 2.739,  \,  0$ 
		& $ 12, \,  8, \, 6,   \, 4.514$
		& $108, \, 54, \, 24.65,   \, 0$ 
		& $108, \, 72, \, 54,  \, 40.63$  
		\\
		Skew
		& $0.966, \ 0.$ 
		& $0.966, \ 0.$ 
		& $8.697, \ 0.$ 
		& $8.697, \ 0.$ 
		\\
	\end{tabular}
	\caption{The largest of the real parts of the eigenvalues for both cubic and skew triangulations, with two different grid sizes and two different scales $c$. The non-integer values are approximated to three decimal places.
	}
	\label{tab:instabEvals}
\end{table}

\begin{remark}
Due to the effect of the scaling factor $c$ in lemma 4, the instabilities are more severe when the grid resolutions are increased. This is the opposite of the piecewise flat approximations, which should converge to their corresponding smooth values as the resolution is increased.
\end{remark}

\

\begin{remark}
The instability for the skew triangulations is an order of magnitude less than for the cubic triangulations for the same block volumes. It can also be noted that the cubic blocks are only borderline Delaunay for a flat manifold, with the circumcentres of all tetrahedra in a single block coinciding at the centre of that block. The skew blocks were initially used because they form more strongly Delauney triangulations, where Voronoi dual volumes can be used with more confidence. For a flat diamond block, the circumcentres of all of the tetrahedra coincide with their barycentres, making them as strongly Delaunay as possible, and may offer a clue to explain the original lack of instability in the diamond triangulations.
\end{remark}

\subsection{Numerical simulations}
\label{sec:NumSim}

Simulations have been run for the piecewise flat Ricci flow of $3\times3\times3$, $4 \times 4 \times 4$ and $5 \times 5 \times 5$ grids of both cubic and skew blocks. The base edge-lengths were taken from table \ref{tab:flatLengths}, scaled by a factor of $1/3$ for the skew triangulations, with lemmas \ref{lem:CubicInstab} and \ref{lem:SkewInstab} indicating that these should give exponential growth rates on the order of $10$. The edge-lengths were then approximated as double-precision floating point numbers, and each perturbed by a random number from a normal distribution with standard deviation of $10^{-15}$, the level of numerical precision. Evolutions were performed using an Euler method with 100 steps of size 0.01, and deviations in the face-diagonal edge-lengths were fitted to a linear combination of exponential functions, 
\begin{align}
f_{cubic}(t) &= a_1 e^{k_1 t} + a_2 e^{k_2 t} + a_3 e^{k_3 t} + c \, ,
\nonumber \\
f_{skew}(t) &= a_1 e^{k_1 t}
+ b \, t + c \,.
\label{eq:best-fit}
\end{align}
The number of terms was chosen to include all of the positive eigenvalues for the $3 \times 3 \times 3$ grid triangulations shown in table \ref{tab:instabEvals}. A linear term was also added for the skew function, as the non-face-diagonal edges showed a consistent linear growth of about $10^{-15}$. Results of the growth rates and $R$-squared values for the best-fit functions are presented in table \ref{tab:pertSim}, with sample graphs of the fitted functions and their corresponding data shown in figure \ref{fig:NumFit}.

\begin{table}[h!]
	\centering
	\begin{tabular}{llc|cc|cc|cc}
	    \multicolumn{3}{c}{}
		&\multicolumn{2}{c}{$3 \times 3 \times 3$}
		&\multicolumn{2}{c}{$4 \times 4 \times 4$}
		&\multicolumn{2}{c}{$5 \times 5 \times 5$}
		\\
		&& Lin. App. 
		& Median & IQR 
		& Median & IQR 
		& Median & IQR
		\\
		\cline{2-9}
		Cubic \ 
		& $k_1$    & $12$ 
		& $11.998$ & $0.001$ 
		& $11.997$ & $0.007$ 
		& $11.97$  & $0.04$ 
		\\
		& $k_2$   & $6$ 
		& $6.04$ & $0.01$ 
		& $6.04$ & $0.08$ 
		& $6.2$  & $0.5$ 
		\\
		& $k_3$  & $2.739$ 
		& $2.86$ & $0.04$ 
		& $2.9$  & $0.2$ 
		& $3.8$  & $1.9$ 
		\\
		& $R^2$  &  
		& $0.99998$ & $10^{-6}$ 
		& $0.99994$ & $10^{-4}$ 
		& $0.99930$ & $10^{-3}$ 
		\\
		&&&&&&&& \\
		Skew \ 
		& $k$ & $8.697$ 
		& $8.339$ & $0.004$ 
		& $8.336$ & $0.01$ 
		& $8.337$ & $0.01$ 
		\\
		& $R^2$  &  
		& $0.999999$ & $10^{-7}$ 
		& $0.999998$ & $10^{-6}$ 
		& $0.999999$ & $10^{-6}$ 
		\\
	\end{tabular}
	\caption{
		The median growth rates and $R$-squared values with their interquartile ranges (IQR) for each triangulation. The values for the cubic triangulations are in extremely close agreement with the corresponding eigenvalues for the $3 \times 3 \times 3$ grid in table \ref{tab:instabEvals}, and the skew parameters in close agreement.
	}
	\label{tab:pertSim}
\end{table}

\begin{figure}[h!]
	\begin{center}
		\includegraphics[scale=1]{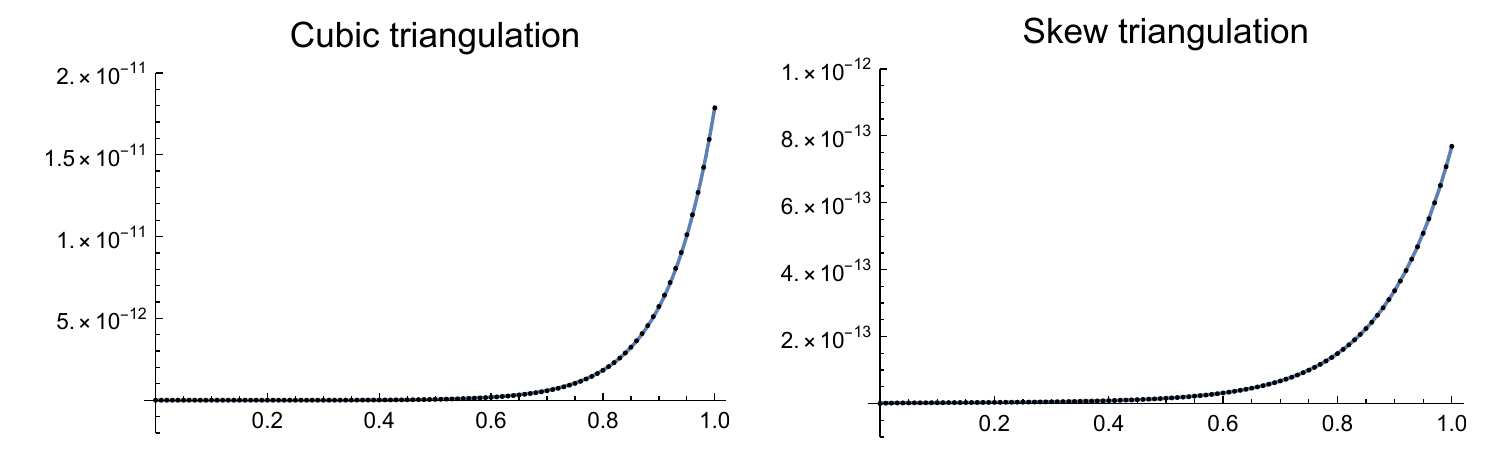}
	\end{center}
	\vspace{-0.5cm}
	\caption{
		The best-fit graphs with the \emph{lowest} $R$-squared values ($0.99998$ and $0.999999$ respectively) for deviations in the edge-lengths of the $3 \times 3 \times 3$ grid triangulations, closely agreeing with the evolution values represented by the points.
	}
	\label{fig:NumFit}
\end{figure}

The close agreement between the growth rates in table \ref{tab:pertSim} and eigenvalues in table \ref{tab:instabEvals}, along with the extremely high $R$-squared values, demonstrate how effective the linearized equations are in approximating the evolution. The low interquartile ranges also show consistent behaviour across all edges, with surprisingly comparable growth rates over the different grid sizes, considering a higher number of distinct eigenvalues should be expected for larger grids. In particular, the simulations support the hypothesis that the eigenvalues in lemmas \ref{lem:CubicInstab} and \ref{lem:SkewInstab} represent the largest growth rates for any cubic or skew type triangulations.

\section{Suppression of Exponential Instability}
\label{sec:Stability}

With the body-diagonals re-defined to give flat interiors for each block, the linear stability analysis and numerical simulations are performed again, with the exponential growth suppressed in both.

\subsection{
Linear instability suppressed
}
\label{sec:ProofStab}

The linearized equations are calculated here by taking advantage of the same properties outlined at the beginning of section \ref{sec:ProofInstab}, but with some additional steps to ensure that each block is flat after any perturbations.

\begin{itemize}
	\item Once a face-diagonal $\ell_j$ is perturbed by an arbitrary amount $\delta_j$, only the blocks on either side of that face are affected. 
	For the body-diagonals of each of these blocks, the deficit angle can be found in terms of $\delta_j$ and the length $b$ of the body-diagonal itself.

	\item Setting the deficit angle to zero, the length $b$ which gives a flat block can be found in terms of $\delta_j$. Since only linear terms in $\delta_j$ will impact the linearized equations, $b$ need only be found as a linear approximation in $\delta_j$.
	
	\item A grid of size $4 \times 4 \times 4$ is now required to determine the piecewise flat Ricci flow for the edges in a central block, due to the changes in body-diagonals.
\end{itemize}

The linearized equations are first used to show that the approaches in lemmas \ref{lem:CubicInstab} and \ref{lem:SkewInstab} no longer imply a linear instability once flat blocks are used, and then to show that the linear instability is actually suppressed for a number of different grid sizes.

\begin{theorem}
\label{lem:ZeroEvalue}
	When the body-diagonals are re-defined to give flat blocks, summing rows of the linear coefficient matrix 
	no longer implies a linear instability 
	of the piecewise flat Ricci flow.
\end{theorem}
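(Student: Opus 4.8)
The plan is to exhibit an explicit family of flat, hence stationary, triangulations produced by the flat-block constraint, and to read off what its tangent directions force on the coefficient matrix. First I would recompute the linearized equations following the three-step procedure above: perturb a single face-diagonal $\ell_j$ by $\delta_j$, impose a zero deficit angle on each of the two body-diagonals adjacent to that face to solve for the body-diagonal length $b$ as a linear function of $\delta_j$, substitute into the curvature expressions (\ref{eq:RandK}) and the flow equation (\ref{eq:PFRF}) on a $4 \times 4 \times 4$ grid, and superpose the single-edge contributions. Exactly as in lemmas \ref{lem:CubicInstab} and \ref{lem:SkewInstab}, cubic symmetry makes every row of the resulting matrix $A$ carry the same multiset of coefficients, while for the skew blocks there are three such multisets, one per face-diagonal type, and in both cases the equation for any face-diagonal in a $T^3$ grid is obtained from finitely many base equations by a discrete transformation of the grid coordinates.

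The key point is geometric. Increase the face-diagonals uniformly across the grid, by an amount that may depend only on the face-diagonal type, while holding the coordinate edges fixed: these lengths then prescribe the pairwise inner products of each block's edge vectors $\vec u, \vec v, \vec w$, and for small increments the corresponding Gram matrix stays positive definite, so every block can be realized as a parallelepiped in $\mathbb{R}^3$ with planar faces. The flat-block constraint re-declares the body-diagonal to be $|\vec u + \vec v + \vec w|$ to first order, so the six tetrahedra of a block embed isometrically inside this parallelepiped and the interior edge also carries zero deficit angle. Since every block of the $T^3$ grid receives the same perturbation, the blocks are congruent parallelepipeds glued along congruent parallelogram faces by the lattice translations, which are isometries, so the whole triangulation embeds in flat space with all deficit angles vanishing. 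By (\ref{eq:RandK}) each $Rc_\ell$ is then zero, so this family consists of stationary solutions of (\ref{eq:PFRF}), and differentiating in the increments yields tangent vectors annihilated by $A$.

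Translating back into the row-summing language of lemmas \ref{lem:CubicInstab} and \ref{lem:SkewInstab} completes the argument. For the cubic blocks, taking the three increments equal gives the all-ones vector, so it is an eigenvector of $A$ whose eigenvalue equals the common row sum; being a zero mode, that row sum is $0$ rather than $12$, and the eigenvalue $0$ produced this way does not witness instability. For the skew blocks, the three independent increments give the three edge-type-uniform vectors of the form (\ref{eq:SkewVec}), all lying in $\ker A$; equivalently the reduced $3 \times 3$ matrix of block row sums assembled as in (\ref{eq:SkewSum})--(\ref{eq:SkewInstab3mat}) is the zero matrix, with spectrum $\{0\}$. In neither case does summing rows of $A$ produce an eigenvalue with positive real part, which is the assertion. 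I would also confirm the vanishing row sums and the zero reduced matrix directly from the symbolically computed coefficients in the Zenodo repository as a consistency check.

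The main obstacle is the middle step: one must justify carefully that the constraint actually used --- re-defining each body-diagonal length so that its deficit angle vanishes --- agrees to first order in the perturbation with ``the block is the Euclidean parallelepiped determined by the perturbed face-diagonals,'' and that gluing these blocks around the torus leaves no residual deficit angle on any shared face-diagonal or coordinate edge. The first part is the linear solvability of the zero-deficit condition for $b$ that is already built into the procedure; the second follows because adjacent blocks differ by lattice translations acting as isometries of the shared parallelogram faces, but both warrant explicit statements. It is worth emphasizing that the claim is only that summing rows ``no longer implies'' instability: this method now yields eigenvalues equal to $0$, consistent with --- but not by itself a proof of --- the genuine stability established for particular grids in what follows.
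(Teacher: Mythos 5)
Your proposal is correct, and while it ends at the same place as the paper --- the rows of the constrained coefficient matrix sum to zero, so the row-sum argument of Theorems \ref{lem:CubicInstab} and \ref{lem:SkewInstab} now yields the eigenvalue $0$ rather than a positive one --- the route to the key fact is genuinely different. The paper's proof is purely computational: it carries out the symbolic linearization with the flat-block constraint imposed, displays the result (equation (\ref{eq:lxyF})), and simply observes that the coefficients sum to zero. You instead \emph{derive} the vanishing from geometry: uniformly incrementing the face-diagonals (per type) with coordinate edges fixed deforms the Gram matrix of each block's edge vectors within the positive-definite cone, the flat-block constraint forces the body-diagonal to the parallelepiped value $|\vec u + \vec v + \vec w|$ to first order, and the translated congruent parallelepipeds tile a flat torus, so the family is stationary and its tangent vectors lie in $\ker A$. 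This buys several things the paper does not claim: an explanation of \emph{why} the row sums vanish (and why they do not vanish without the constraint, since there the body-diagonal is not the parallelepiped value); independence from trusting the symbolic computation; and a strictly stronger conclusion --- each edge-type-uniform vector is separately in the kernel, so the type-wise partial row sums vanish individually and the reduced $3\times 3$ matrix is identically zero. This stronger prediction is in fact confirmed by (\ref{eq:lxyF}), where the $\delta_{xy}$, $\delta_{yz}$ and $\delta_{zx}$ coefficients each sum to zero on their own. The one step you rightly flag --- that the zero-deficit-angle re-definition of the body-diagonal agrees to first order with the isometric parallelepiped embedding, and that the lattice gluing leaves no residual deficit at shared faces and coordinate edges --- is exactly where the care is needed, and your treatment of it (local unique solvability of the linearized constraint, plus translations acting as isometries on a $T^3$ grid) is sound. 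The paper's explicit computation remains necessary downstream, since the eigenvalue tables for particular grids require the actual coefficients, but as a proof of this particular statement your argument is cleaner and more general.
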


\begin{proof}
	Each face-diagonal $\ell_j$ in a $4 \times 4 \times 4$ grid of blocks is perturbed away from the flat values in table \ref{tab:flatLengths} by an arbitrary amount $\delta_j$, with the body-diagonals on either side of that face re-defined in terms of $\delta_j$ to give a zero deficit angle. The linear impact of each perturbation $\delta_j$ on a face-diagonal $\ell_i$ in a central block can then be calculated using the piecewise flat Ricci flow equations (\ref{eq:PFRF}), and the separate terms summed to give the linearized equation for $\ell_i$.
	
	The resulting equation is shown below for an $xy$-face-diagonal of the cubic triangulation, 
	with the coordinates indicating the relative locations of the face-diagonals on the grid,
	\begin{align}
	\frac{d}{d t} \, \delta_{x y} &(0,0,0) =
	\nonumber \\ & 
	- 5 \ \delta_{x y} (0,0,0)
	- \delta_{x y} (-1,-1,0)
	- \delta_{x y} (1,1,0)
	\nonumber \\ &
	- 1/4 \big(
		  \delta_{x y} (-1,0,1)
		+ \delta_{x y} (0,-1,1)
		+ \delta_{x y} (0,1,-1)
		+ \delta_{x y} (1,0,-1)
		\big)
	\nonumber \\ &
	+ 5/4 \big(
		  \delta_{x y} (-1,0,0)
		+ \delta_{x y} (0,-1,0)
		+ \delta_{x y} (0,1,0)
		+ \delta_{x y} (1,0,0)
		\big)
	\nonumber \\ &
	+ 3/2 \big(
		  \delta_{x y} (0,0,-1)
		+ \delta_{x y} (0,0,1)
		\big)
	\nonumber \\ &
	\nonumber \\ &
	- 1/2 \big(
		\delta_{y z} (0,-1,-1)
		+ \delta_{y z} (0,0,-1)
		+ \delta_{y z} (1,0,0)
		+ \delta_{y z} (1,1,0)
		\big)
	\nonumber \\ &
	- 1/4 \big(
		\delta_{y z} (-1,0,0)
		+ \delta_{y z} (0,1,-1)
		+ \delta_{y z} (1,-1,0)
		+ \delta_{y z} (2,0,-1)
		\big)
	\nonumber \\ &
	+ 3/4 \big(
		\delta_{y z} (0,-1,0)
		+ \delta_{y z} (0,0,0)
		+ \delta_{y z} (1,0,-1)
		+ \delta_{y z} (1,1,-1)
		\big)
	\nonumber \\ &
	\nonumber \\ &
	- 1/2 \big(
		  \delta_{z x} (-1,0,-1)
		+ \delta_{z x} (0,0,-1)
		+ \delta_{z x} (0,1,0)
		+ \delta_{z x} (1,1,0)
		\big)
	\nonumber \\ &
	- 1/4 \big(
		  \delta_{z x} (-1,1,0)
		+ \delta_{z x} (0,-1,0)
		+ \delta_{z x} (0,2,-1)
		+ \delta_{z x} (1,0,-1)
		\big)
	\nonumber \\ &
	+ 3/4 \big(
		  \delta_{z x} (-1,0,0)
		+ \delta_{z x} (0,0,0)
		+ \delta_{z x} (0,1,-1)
		+ \delta_{z x} (1,1,-1)
		\big)
	.
	\label{eq:lxyF}
	\end{align}
	As in the proof of lemma \ref{lem:CubicInstab}, the linearized equations for the other two types of face-diagonal in the cubic triangulation are simply given by a permutation of the $\{x y, y z, z x\}$ subscripts and of the grid coordinates.  The linearized equations for a skew triangulation are not displayed here, but can be found in the Zenodo repository at https://doi.org/10.5281/zenodo.8067524. 
	The linearized equations for all of the face-diagonals in any $T^3$ grid of cubic or skew blocks can be found through a discrete transformation of the grid coordinates.
	
	The coefficients in equation (\ref{eq:lxyF}) can be seen to sum to zero, with the coefficients of the linearized equations for the face-diagonals in the skew triangulation also summing to zero. This implies that the vector of all ones is an eigenvector of the coefficient matrix $A$, with an eigenvalue of zero, for all grid-sizes of both the cubic and skew triangulations. While this approach gave positive eigenvalues in lemmas \ref{lem:CubicInstab} and \ref{lem:SkewInstab}, proving the equations there to be linearly unstable, it does not do so when flat blocks are used instead of just flat tetrahedra.
\end{proof}

\begin{remark}
For many constant row-sum matrices, the eigenvalue given by the row-sum can be proved to be the largest eigenvalue. For example, if all of the coefficients except the first in equation \ref{eq:lxyF} were positive, the Gershgorin circle theorem would be sufficient to prove this. Recent progress has also been made on eigenvalue bounds for more general constant row-sum matrices \cite{RowSumMH19,RowSumBS23}. Unfortunately, a proof that the same is true in this case has not been found, but direct computation of the eigenvalues below and the numerical simulations in section \ref{sec:NumSimStab} suggest that it is true.
\end{remark}

\

For particular grid sizes, the linearized equations were used to construct the coefficient matrices directly, using the mathematical software \emph{Mathematica}, and the set of eigenvalues computed for each. The eigenvalue with the largest real part is given for each piecewise flat manifold in table \ref{tab:stabEvals}. For the cubic blocks these were computed exactly, but numerical methods were required to compute the eigenvalues for the skew blocks.

\begin{table}[h!]
	\centering
	\begin{tabular}{l|cc|cc|}
		\multicolumn{1}{c}{}
		&\multicolumn{2}{c}{$c = 1$}
		&\multicolumn{2}{c}{$c = 1/3$}
		\\
		& $3 \times 3 \times 3$
		& $3 \times 3 \times 4$
		& $3 \times 3 \times 3$
		& $3 \times 3 \times 4$
		\\
		\hline
		  Cubic
		& $0$ 
		& $0$ 
		& $0$ 
		& $0$
		\\
		  Skew
		& $3.0 \times 10^{-15}$ 
		& $3.3 \times 10^{-15} $ 
		& $2.0 \times 10^{-14}$ 
		& $3.7 \times 10^{-14} $ 
		\\
	\end{tabular}
	\caption{The eigenvalues with the largest real part, showing a suppression of the linear instability when flat blocks are used.
	}
	\label{tab:stabEvals}
\end{table}

Clearly, there cannot be any exponential growth terms for the linearized equations in the cubic triangulations since the largest real part of the eigenvalues is zero. The largest values for the skew triangulation are \emph{practically} zero, being equivalent to zero for the numerical precision of the computations. While an eigenvalue of zero does not imply linear stability, it is consistent with the linearization of the smooth Ricci flow near a flat manifold, which also contains zero eigenvalues \cite{GIKstability}. Also, since the piecewise flat curvatures depend on local regions of a piecewise flat manifold, it is deemed unlikely that the instability would re-emerge in larger grids.

\subsection{Numerical simulations with instability suppressed}
\label{sec:NumSimStab}

Numerical simulations of the piecewise flat Ricci flow have also been run with blocks that are effectively flat. In order to compare with the simulations in section \ref{sec:NumSim}, the same triangulations and edge-length perturbations were used, and evolved using the same Euler method with 100 steps of size 0.01, but with the body-diagonal edge-lengths adapted at the beginning of each step. This was done by first adding a perturbation variable $\delta_j$ to the length of each body-diagonal $\ell_j$, computing the deficit angle around $\ell_j$ as a function of $\delta_j$, and then setting this equal to zero and solving for $\delta_j$. Since the deficit angles should already be small for a good triangulation, a linear approximation in $\delta_j$ about zero is used for the deficit angle, giving a unique solution for $\delta_j$ when set equal to zero. To show that the exponential growth is suppressed, the median and maximum changes in edge-lengths for all triangulations are shown in table \ref{tab:SuppChanges}, with the deviations of the edge-lengths for the blocks with the largest changes shown in figure \ref{fig:NumNonExp}. These same edges were used for the graphs in figure \ref{fig:NumErr}.

\begin{table}[h!]
	\centering
	\centering
	\begin{tabular}{ll|ccc}
		&
		&$3 \times 3 \times 3$
		&$4 \times 4 \times 4$
		&$5 \times 5 \times 5$
		\\
		\hline
		Cubic
		&Median change \
		& $0$ 
		& $0$ 
		& $0$ 
		\\
		&Max. change \
		& $1.7 \times 10^{-15}$ 
		& $3.1 \times 10^{-15}$ 
		& $2.2 \times 10^{-15}$ 
		\\
		&Initial pert. \
		& $2.4 \times 10^{-15}$ 
		& $3.1 \times 10^{-15}$ 
		& $3.3 \times 10^{-15}$ 
		\\
		&&&& \\
		Skew
		&Median change \
		& $4.6 \times 10^{-15}$ 
		& $4.6 \times 10^{-15}$ 
		& $4.6 \times 10^{-15}$ 
		\\
		&Max. change \
		& $6.7 \times 10^{-15}$ 
		& $8.0 \times 10^{-15}$ 
		& $8.1 \times 10^{-15}$ 
		\\
		&Initial pert. \
		& $2.9 \times 10^{-15}$ 
		& $3.1 \times 10^{-15}$ 
		& $4.0 \times 10^{-15}$ 
		\\
		&&&& \\
		&Median slope \
		& $4.5 \times 10^{-15}$ 
		& $4.6 \times 10^{-15}$ 
		& $4.5 \times 10^{-15}$ 
		\\
		&IQR of slope \
		& $7.5 \times 10^{-16}$ 
		& $7.8 \times 10^{-16}$ 
		& $7.6 \times 10^{-16}$ 
		\\
	\end{tabular}
	\caption{
		The median and maximum edge-length changes for each triangulation, along with the maximum values of the initial random perturbations. The median and interquartile ranges (IQR) for the best-fit linear functions in the skew triangulations are also shown. All values are close to the numerical precision, showing a suppression of the exponential growth.
	}
	\label{tab:SuppChanges}
\end{table}

The adapting of the body-diagonals has clearly suppressed the exponential growth for both the cubic and skew simulations. For the cubic triangulations, most of the edge-lengths do not change, and those that do, only change during the first quarter of time steps and by an extremely small amount, less than the largest of the initial perturbations. While the growth rates are non-zero throughout the simulation, as seen in figure \ref{fig:NumNonExp}, when combined with the step size for the Euler method the resulting changes are lower than the numerical precision. The set of edge-lengths then become stationary once the rates of change drop below this threshold.

For the skew triangulations, the numerical precision is lower as it depends on the lengths of the edges, so the oscillations in the lower-right graph in figure \ref{fig:NumNonExp} continue to produce a linear growth. Linear functions have therefore been fitted to the data for each edge in the triangulation, with the results in table \ref{tab:SuppChanges} showing consistent, extremely small rates of change across all edges, also agreeing with the rates of the background linear growth for the un-suppressed simulations in section \ref{sec:NumSim}.
While this linear growth does not directly correspond to smooth Ricci flow, it does not conflict with it either. The consistency of the rates of change lead to a global change in the scale but will not produce any curvature, so the piecewise flat manifold remains in a stationary state, growing but remaining flat. The low rate means the effect will not be noticeable at regular scales for extremely long time frames, but the effect can still be avoided by using the normalized Ricci flow which preserves the global volume.

\begin{figure}[h!]
	\begin{center}
		\includegraphics[scale=1]{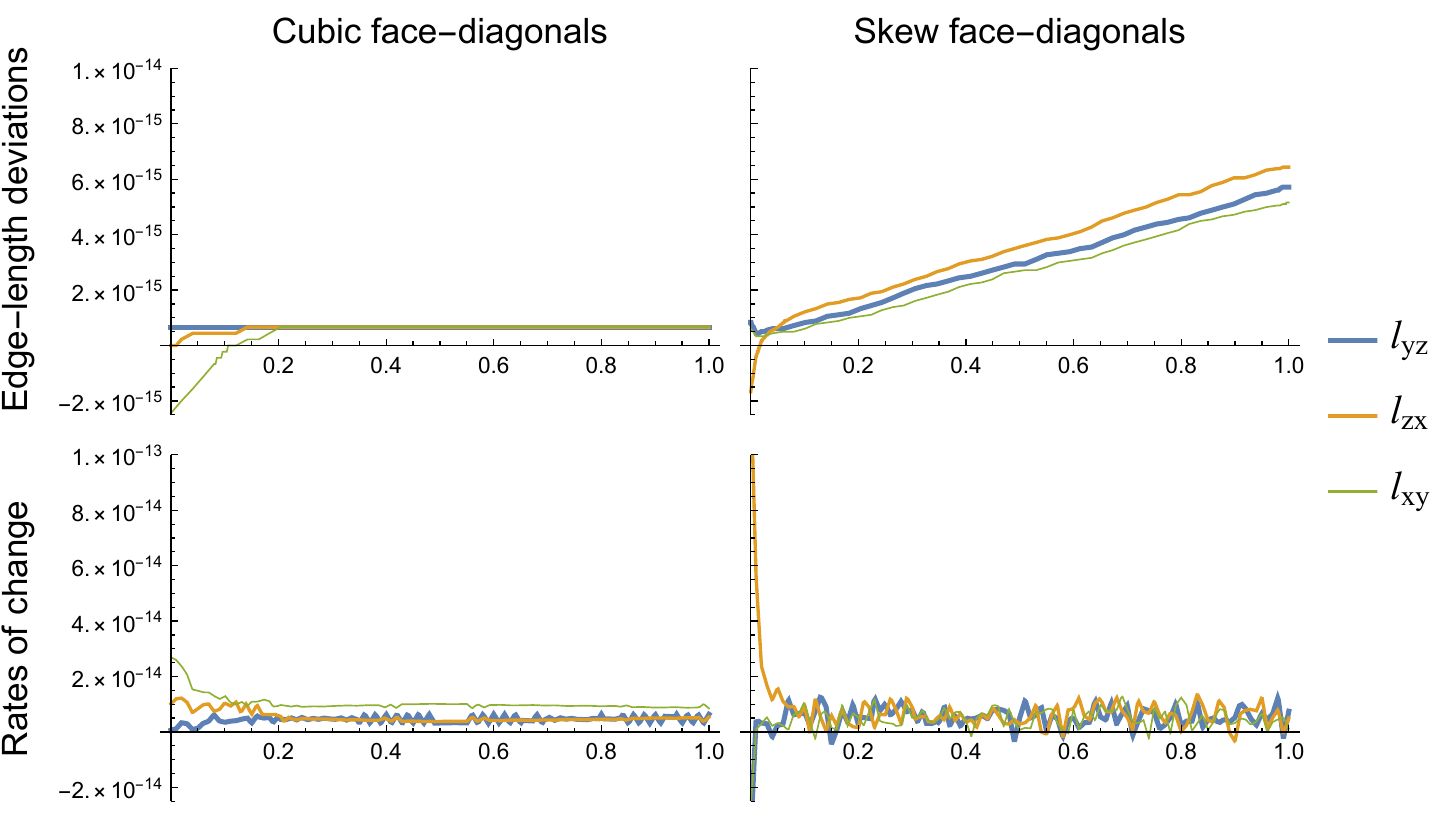}
	\end{center}
	\vspace{-0.5cm}
	\caption{
		Deviations of the face-diagonal edge-lengths from their pre-perturbed values, for blocks with the largest edge-length changes, and the corresponding rates of change from the adapted piecewise flat Ricci flow, showing a clear suppression of the exponential instability.
	}
	\label{fig:NumNonExp}
\end{figure}

\section{Conclusion}
\label{sec:Con}

The stability of the piecewise flat Ricci flow has been demonstrated for cubic and skew type triangulations that have been adapted so that the interior of each block is effectively flat. 
In practice, the six tetrahedra in each block are kept, with the length of each interior body-diagonal re-defined to give a zero deficit angle, and therefore a flat interior for each block. This makes the internal angles easier to compute, since the internal geometry of a tetrahdron is entirely determined by the lengths of its edges, and ensures that the edge-lengths alone determine the geometry of each piecewise flat manifold.

A linear stability analysis has verified the exponential instability of the face-diagonal edges seen in numerical simulations for cubic and skew triangulations of flat Euclidean space. 
The linear coefficient matrices have a constant positive value for the sum of the elements in each row, which must therefore be an eigenvalue, and coincides with the largest growth rate seen in the numerical simulations. 
Once the triangulations are adapted, the row-sums of the linear coefficient matrices are all zero, and the numerical simulations are stable. 
As with related types of matrices, it seems reasonable to expect the constant row-sums to give the largest real eigenvalue for each linear coefficient matrix, which would be zero here, in agreement with the smooth Ricci flow \cite{GIKstability}. 
A proof for this has not been found, but direct computation of the eigenvalues for specific triangulations, and numerical simulations for others, support the hypothesis.

While this paper only shows that the adapted triangulations of a flat manifold are stationary for the piecewise flat Ricci flow of \cite{PLCurv}, 
this flow is seen to successfully approximate the smooth Ricci flow in \cite{PLRF} for adapted cubic and skew type triangulations of a variety of different manifolds.
Also, despite re-defining the body-diagonals to have zero deficit angles, computations of the piecewise flat Ricci curvature in \cite{PLRF} are just as accurate at these body-diagonals as they are at the other edges.

\

\noindent
{\bf Data Availability:} \ The \emph{Mathematica} notebooks used for the computations and numerical simulations, and the data generated by these, are available in the Zenodo repository at https://doi.org/10.5281/zenodo.8067524.

\section*{Acknowledgements}

I'd like to thank Robert Sheehan and Chris Mitchell for many helpful discussions which greatly benefited this work.

\bibliography{Ref}
\bibliographystyle{unsrt}

\end{document}